\newtheorem{theorem}{Theorem}
\newtheorem{corollary}{Corollary}
\begin{document}
\title{\bf Realizing the Chromatic Numbers \\and Orders of Spinal Quadrangulations \\of Surfaces}

\author{\Large Serge Lawrencenko \\Department of Mathematics and Science Education \\ 
Faculty of Service, Russian State University of Tourism and Service\\
Glavnaya 99, Cherkizovo, Moscow Region 141221, Russia \\
{\tt lawrencenko@hotmail.com}}
\maketitle

\begin{abstract}
A method is suggested for construction of quadrangulations of the closed orientable surface with given genus $g$ and either (1) with given chromatic number or (2) with given order allowed by the genus $g$. In particular, N.~Hartsfield and G. Ringel's results [J. Comb. Theory, Ser. B 46 (1989), 84-95] are generalized by way of generating minimal quadrangulations of infinitely many other genera. 
\end{abstract}

{\bf Keywords:} embedding of graph, coloring of graph, Betti number, minimal quadrangulation, computer animation.

{\bf MSC Classification:} 05C10 (Primary); 05C15, 05C75, 57M15 (Secondary).

\section{Motivation}

The purpose of this note is to suggest a method for generating quadrangulations of closed orientable surfaces with given properties. Those surfaces arise as the thickenings of spatial graphs called the spines. The operation of thickening a given graph $G$ builds a surface around $G$ which is used in computer animation, as shown in \cite {bbb}. The first step of thickening $G$ consists in building a cylinder around each edge of $G$ by setting four quadrilaterals around that edge. Therefore a thickening of a spine graph is represented by its so-called spinal quadrangulation. The properties of the spinal quadrangulation are determined by its spine. In particular, one can control the genus, chromatic number, and order of the quadrangulation by choosing a suitable spine. The question arises of how wide the spectra of these three parameters can be.

\section{Spinal Quadrangulations}

If a (finite, undirected, simple) graph $G$ is 2-cell embedded in the sphere with $g$ handles, $\Sigma_g$, the components of $\Sigma_g-G$ are called {\it regions}. A {\it quadrangulation of $\Sigma_g$ with graph $G$} is a 2-cell embedding $G \hookrightarrow \Sigma_g $ in which each region is bounded by a simple circuit of length $4$ in $G$. The {\it chromatic number} $\chi (G)$ of $G$, as well as of any  quadrangulation with this graph, is the minimum number of colors sufficient for coloring the vertices of $G$ so that adjacent vertices receive different colors. The {\it first Betti number} of a connected graph $G$ is given by $\beta (G)=|E(G)|-|V(G)|+1$, where $|V(G)|$ and $|E(G)|$ stand for the cardinalities of the vertex and edge sets of $G$ (respectively). The cardinality $|V(G)|$ is also called the {\it order} of $G$. Especially, for the complete graph $K_n$ of order $n$, $\beta(K_n)=\frac 1 2 (n-1)(n-2)$. 

The {\it 2-fold interlacement} of $G$ is denoted by ${G [:]}$ and is defined to be the graph which has vertex set ${V(G[:])}=V(G') \sqcup V(G'')$, where  $G'$ and $G''$ are two disjoint copies of $G$, and as edges the set ${E(G[:])}=E(G') \sqcup E(G'')$ plus the edges that join each vertex $v' \in V(G')$ to each vertex in $V(G'')$ that is adjacent (in $G''$) to the corresponding vertex $v'' \in V(G'')$. For instance, $K_n[:] = K_{n(2)}$---the general octahedral graph $O_n$ which is the complement of a 1-factor in the complete graph $K_{2n}$ ($n\ge 2$).

\begin{theorem} [\bf White \cite {w}, Craft \cite {c}]
For any non-trivial connected graph $G$, there exists a quadrangulation ${G[:]} \hookrightarrow \Sigma_{\beta(G)}$.
\end{theorem}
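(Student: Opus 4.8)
The plan is to induct on $|E(G)|$, producing at each stage a genuine quadrangulation (a $2$-cell embedding whose every face is bounded by a simple $4$-circuit). The genus is then forced: writing $n=|V(G)|$ and $m=|E(G)|$, the graph $G[:]$ has $2n$ vertices and $4m$ edges, and a quadrangulation with $F$ faces satisfies $2F=4m$, so Euler's formula gives $2n-4m+2m=2-2g$, i.e.\ $g=m-n+1=\beta(G)$. Thus only orientability and ``every face a quadrilateral'' need to be engineered. It will be necessary to carry through the induction a strengthened hypothesis that also records, for every $v\in V(G)$, the availability of faces of the form $(v',x,v'',y)$, which is automatically a $4$-circuit since $v'\not\sim v''$ in the simple graph $G[:]$.

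The base case is $m=1$: then $G=K_2$ and $G[:]=K_{2,2}=C_4$, which $2$-cell embeds in the sphere $\Sigma_{0}=\Sigma_{\beta(K_2)}$ with two quadrilateral faces. For the inductive step there are two cases. If $G$ is a tree with $n\ge 3$, remove a leaf $\ell$ with neighbour $v$; by induction $(G-\ell)[:]$ has a sphere quadrangulation possessing a face $(v',p,v'',q)$. Insert the new vertices $\ell',\ell''$ and the new edges $\ell'v',\,\ell'v'',\,\ell''v',\,\ell''v''$ into that open quadrilateral disk, splitting it into the three quadrilaterals $(v',\ell',v'',\ell'')$, $(v',p,v'',\ell')$, $(v',q,v'',\ell'')$; this adds two vertices, four edges and two faces, keeps the surface $\Sigma_0$, and yields $G[:]$. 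If instead $G$ has a cycle, pick an edge $e=uv$ on it, so $G_0:=G-e$ is connected with $\beta(G_0)=\beta(G)-1$; by induction $G_0[:]$ has a quadrangulation of $\Sigma_{\beta(G)-1}$ with a face $F_u=(u',p,u'',q)$ and a face $F_v=(v',r,v'',s)$ --- and $F_u\ne F_v$ automatically, since a common face would force the four (absent) edges of the $uv$-block into $G_0[:]$. Delete the open disks $F_u,F_v$ and glue in a handle joining their boundary circuits; draw on it the $4$-circuit $(u',v',u'',v'')$, whose four edges $u'v',\,v'u'',\,u''v'',\,v''u'$ are exactly the four new edges of $G[:]$. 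This circuit splits the cylinder into two annuli, each cut by its two shared boundary vertices into two quadrilaterals, namely $(u',p,u'',v')$, $(u',q,u'',v'')$ on one side and $(v',r,v'',u')$, $(v',s,v'',u'')$ on the other. The surgery destroys two faces and creates four, adds four edges and no vertices, and raises the genus by one, giving a quadrangulation of $\Sigma_{\beta(G)}$ with graph $G[:]$.

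The point that needs real care --- and the step I expect to be the main obstacle --- is maintaining the strengthened hypothesis: each surgery both consumes and produces faces of the form $(v',x,v'',y)$, and one must show they never run out, choosing the faces operated on so as to avoid depleting the ``doubly centred'' faces $(v',w',v'',w'')$ that serve as shared budget for two vertices at once (and of which, by connectivity, each edge of $G$ supplies at most one). By contrast, the surgeries themselves and the accompanying Euler-characteristic and orientability checks are routine.
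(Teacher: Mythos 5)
The paper never proves this statement---it is imported wholesale from White and Craft---so there is no in-paper argument to measure yours against; for what it is worth, your handle-and-disk surgery is in the spirit of Craft's surgical constructions. Your two local moves themselves are sound: the Euler count forcing $g=\beta(G)$, the subdivision of a face $(v',p,v'',q)$ into three quadrilaterals when a leaf is attached, and the four quadrilaterals carved out of the handle joining $F_u$ and $F_v$ all check out, including the simplicity of the new $4$-circuits (which uses $u\not\sim v$ in $G_0$) and the fact that $F_u\neq F_v$.

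The genuine gap is exactly the step you flag and then leave open: preservation of the strengthened hypothesis that every vertex $v$ always owns a face of the form $(v',x,v'',y)$. This is not a routine verification to be deferred---it is the heart of the proof---and as stated the invariant is \emph{not} automatically preserved by your moves. Concretely: when a leaf $v$ is attached at $u$, the only $v$-centred face created is $(u',v',u'',v'')$, which is doubly centred at $u$; when $v$ later acquires its own first neighbour, that face must be consumed, decrementing $u$'s supply; if at that moment every remaining $u$-centred face is likewise doubly centred at some vertex whose own supply is critical, no admissible choice exists, and nothing in your argument excludes this configuration. The natural candidate strengthenings also fail as stated: ``every edge $vw$ of $G$ retains a doubly centred face $(v',w',v'',w'')$'' is destroyed the first time an endpoint of $vw$ gains a further neighbour, and ``$v$ has at least $\deg_G(v)$ centred faces'' can be broken by exactly the collateral damage you describe. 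What is missing is a concrete mechanism---for instance, a prescribed order of operations (build a spanning tree in search order, then add the $\beta(G)$ handles) together with a counting invariant such as ``$v$ retains at least one centred face per $G$-neighbour not yet attached, plus one''---and a verified case analysis showing that a non-depleting choice of face is always available. Until that is supplied, the induction does not close.
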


Any quadrangulation ${G[:]} \hookrightarrow \Sigma_{\beta(G)}$ is called a {\it spinal quadrangulation} with {\it spine} $G$ and {\it genus} $g=\beta(G)$. By Theorem 1, the genus of a spinal quadrangulation is equal to the first Betti number of the spine. The following corollary is a quadrilateral analogue of the result of \cite {hlk} on triangulations.

\begin{corollary}
For any integers $g \ge 0$ and $n \ge 2$ such that $g \ge \beta(K_n)=\frac 1 2 (n-1)(n-2)$, there exists a spinal quadrangulation of $\Sigma_g$ with chromatic number $n$.
\end{corollary}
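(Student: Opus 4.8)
The plan is to reduce the corollary to a graph-theoretic construction of a suitable spine, exploiting the fact that $2$-fold interlacement preserves the chromatic number.

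The first ingredient is the identity $\chi(G[:]) = \chi(G)$, valid for every graph $G$. Since the copy $G'$ appears in $G[:]$ as an induced subgraph isomorphic to $G$, we get $\chi(G[:]) \ge \chi(G)$. For the opposite inequality, take a proper $\chi(G)$-coloring $c$ of $G$ and color both $v'$ and $v''$ by $c(v)$; every edge of $G[:]$ is an edge $v'w'$ of $G'$, an edge $v''w''$ of $G''$, or a cross edge $v'w''$ with $vw \in E(G)$, and in each case its endpoints receive the colors $c(v), c(w)$, which differ because $vw \in E(G)$. So the coloring is proper and $\chi(G[:]) \le \chi(G)$.

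The second ingredient is, for given integers $n \ge 2$ and $g \ge \beta(K_n)$, a connected simple graph $G$ with $\chi(G) = n$ and $\beta(G) = g$. Granting this, Theorem 1 produces a quadrangulation $G[:] \hookrightarrow \Sigma_{\beta(G)} = \Sigma_g$, which is by definition a spinal quadrangulation of $\Sigma_g$ and, by the first ingredient, has chromatic number $\chi(G[:]) = \chi(G) = n$, as required. When $n \ge 3$, I would take the complete graph $K_n$ on $v_1, \dots, v_n$ and adjoin $m := g - \beta(K_n) \ge 0$ new vertices $u_1, \dots, u_m$, each joined to the two vertices $v_1, v_2$. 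This $G$ is simple, connected, and non-trivial; it contains $K_n$, so $\chi(G) \ge n$; it is $n$-colorable because a proper coloring of $K_n$ extends by giving each $u_i$ a color different from those of $v_1$ and $v_2$, which is available since $n \ge 3$; hence $\chi(G) = n$. Counting edges and vertices, $\beta(G) = \bigl(\binom{n}{2} + 2m\bigr) - (n + m) + 1 = \beta(K_n) + m = g$. When $n = 2$ we have $\beta(K_2) = 0$, and I would take the complete bipartite graph $G = K_{2,\,g+1}$, which is connected and bipartite---so $\chi(G) = 2$---and satisfies $\beta(K_{2,\,g+1}) = 2(g+1) - (g+3) + 1 = g$.

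I do not expect a serious obstacle: once $\chi(G[:]) = \chi(G)$ is established, everything else is routine bookkeeping. The one point that needs care is the case $n = 2$: the ``adjoin a degree-$2$ vertex'' device works only when $n \ge 3$, because joining a new vertex to both endpoints of the single edge of $K_2$ would create a triangle and force $\chi = 3$; this is why $n = 2$ is dispatched separately via complete bipartite spines. The hypothesis $g \ge \beta(K_n)$ enters exactly to guarantee that $m = g - \beta(K_n)$ is a nonnegative integer.
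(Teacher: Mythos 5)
Your proposal is correct and follows essentially the same route as the paper: reduce to finding a connected spine $G$ with $\chi(G)=n$ and $\beta(G)=g$, invoke Theorem 1, and use $\chi(G[:])=\chi(G)$ (which the paper asserts as obvious but you rightly verify, including the point that no edge joins $v'$ to its own partner $v''$). The only difference is the padding gadget for raising the Betti number from $\beta(K_n)$ to $g$ --- the paper glues a bipartite ladder graph onto $K_n$, which handles $n=2$ uniformly, whereas your degree-$2$-vertex device forces the separate $K_{2,g+1}$ case you correctly identify --- but both constructions are routine and the arithmetic in each checks out.
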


\begin{proof}
Since it is obvious that ${\chi (G[:])} = \chi (G)$, it follows from Theorem 1 that we can use as a spine any graph $G$ with $\beta (G) = g$ and $\chi (G) = n$. To construct such a spine, we start up with $K_n$. If $\beta (K_n) = g$, we're done. If $\beta (K_n) < g$, we take the ladder graph $L_{g-\beta(K_n)+1}$ with $g-\beta(K_n)$ linearly independent cycles (of length 4) and glue one of its ends on $K_n$. The {\it ladder graph} is defined to be the Cartesian product of two path graphs, one of which has only one edge: $L_n = P_n \times P_1$.
\end{proof}

The spectrum $\{n\}$ described in Corollary 1 for possible chromatic numbers is full for any fixed genus $g \ge 0$. In fact, $\frac 1 2 (\chi(G)-1)(\chi(G)-2) = \frac 1 8 (2\chi(G)-3)^2-\frac 1 8 \le \beta(G)$, where the inequality is provided by \cite{chs, cz}, whence the maximum chromatic number $n$ of a graph with given first Betti number $g \ge 0$ is attained by the largest complete graph $K_n$ such that $\beta(K_n) \le g$.

Yet, the question remains of what the limitations of the proposed approach are when speaking about the whole spectrum of possible chromatic numbers for a quadrangulation of a given surface. If $K_n$ has a quadrilateral embedding in $\Sigma_g$ (the case $n\equiv 5 \mod{8}$ was shown in \cite{hr}), then $g=1+ \frac 1 8 n(n-5)$ and one may expect that for each $h$ such that $2 \le h \le n$ there would be a quadrangulation of $\Sigma_g$ with chromatic number $h$. Meanwhile, the largest integer $m$ such that
$$
\beta(K_m) = \frac12 (m-1)(m-2) \le g=1+\frac18 n(n-5)
$$
is $m=\lfloor \frac12 (3+\sqrt{8g+1})\rfloor \approx \frac n 2$ for large $n$. Therefore, by using quadrilateral embeddings of spine graphs, we manage to cover about a half of the possible values of the chromatic number.

\section{Minimal Quadrangulations}

We start up  with two corollaries of Theorem 1.

\begin{corollary}
For any integer $p \ge 2$ there exists a quadrangulation ${K_p[:]} \hookrightarrow \Sigma_g$ with $g = \beta(K_p)=\frac 1 2 (p-1)(p-2)$.
\end{corollary}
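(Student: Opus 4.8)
The plan is to invoke Theorem 1 directly with the spine $G = K_p$. First I would check that the hypothesis of Theorem 1 is satisfied: for every $p \ge 2$ the complete graph $K_p$ is connected and has at least one edge, hence it is a non-trivial connected graph. Theorem 1 then immediately supplies a quadrangulation $K_p[:] \hookrightarrow \Sigma_{\beta(K_p)}$.

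It then remains only to identify the genus. Since $|V(K_p)| = p$ and $|E(K_p)| = \binom{p}{2}$, the definition $\beta(G) = |E(G)| - |V(G)| + 1$ gives $\beta(K_p) = \binom{p}{2} - p + 1 = \frac12(p-1)(p-2)$, exactly the value recorded in the discussion preceding Theorem 1. Putting $g = \frac12(p-1)(p-2)$ then finishes the argument.

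I expect essentially no obstacle here, since the statement is a verbatim specialization of Theorem 1; the only point worth a second glance is the base case $p = 2$, where $K_2[:] = O_2$ is a $4$-cycle, $g = 0$, and the $4$-cycle is indeed a quadrangulation of the sphere $\Sigma_0$. If instead one wanted a proof not appealing to Theorem 1, all the effort would go into reconstructing the White--Craft rotation-system construction that realizes the quadrilateral embedding of $G[:]$ on $\Sigma_{\beta(G)}$; but with Theorem 1 in hand this is unnecessary.
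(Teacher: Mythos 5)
Your proposal is correct and matches the paper exactly: the paper states this as an immediate corollary of Theorem~1 with spine $G=K_p$, with no further argument beyond the Betti-number computation $\beta(K_p)=\tbinom{p}{2}-p+1=\frac12(p-1)(p-2)$ that you carry out. Your check of the degenerate case $p=2$ (where $K_2[:]$ is a $4$-cycle quadrangulating the sphere) is a sensible extra sanity check but not something the paper bothers with.
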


\begin{corollary}
Let $p$ be an integer $\ge3$ and let $(K_p-e)$ be any graph formed by deleting an edge from $K_p$. Then there exists a quadrangulation ${(K_p-e)[:]} \hookrightarrow \Sigma_g$ with $g=\beta(K_p)-1$.
\end{corollary}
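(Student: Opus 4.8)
The plan is to obtain the desired quadrangulation by a single application of Theorem~1, taking the spine to be $G := K_p - e$ itself. In other words, everything reduces to two routine checks: that $K_p - e$ satisfies the hypotheses of Theorem~1 (non-trivial and connected), and that its first Betti number equals $\beta(K_p) - 1$. No new embedding need be constructed by hand; the combinatorial work of producing a quadrilateral embedding of the $2$-fold interlacement is already packaged inside Theorem~1 (the White--Craft theorem), which we are free to invoke.

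First I would verify the hypotheses. By construction $K_p - e$ is finite, undirected and simple. It is non-trivial since it has $\binom{p}{2} - 1 \ge 2$ edges whenever $p \ge 3$. For connectedness, note that $K_p$ is $(p-1)$-connected, hence $(p-1)$-edge-connected; since $p - 1 \ge 2$, deleting the single edge $e$ cannot disconnect it. (In the extreme case $p = 3$ one may simply observe that $K_3 - e$ is the path $P_3$.) Thus $K_p - e$ is a legitimate spine, and moreover, since all graphs obtained from $K_p$ by deleting one edge are isomorphic, the choice of $e$ is immaterial.

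Next I would compute the Betti number. Deleting an edge leaves the vertex set unchanged and removes exactly one edge, so
$$
\beta(K_p - e) = \bigl(|E(K_p)| - 1\bigr) - |V(K_p)| + 1 = \beta(K_p) - 1 = \tfrac12(p-1)(p-2) - 1 .
$$
Applying Theorem~1 to the connected non-trivial graph $K_p - e$ then yields a quadrangulation $(K_p - e)[:] \hookrightarrow \Sigma_{\beta(K_p - e)}$, that is, a quadrangulation of $\Sigma_g$ with $g = \beta(K_p) - 1$, as required. I do not expect a genuine obstacle here: the only point demanding any care is the connectedness check for small $p$, and beyond that the statement is a direct bookkeeping consequence of Theorem~1.
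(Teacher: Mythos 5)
Your proof is correct and follows exactly the route the paper intends: the paper presents this statement as an immediate corollary of Theorem~1, obtained by taking $K_p - e$ as the spine and computing $\beta(K_p - e) = \beta(K_p) - 1$. Your explicit verification of connectedness and non-triviality is careful bookkeeping that the paper leaves implicit, but the argument is the same.
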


A quadrangulation of a fixed surface $\Sigma_g$ is said to be {\it minimal} in $\Sigma_g$ if the number of vertices is minimal among all (not necessarily spinal) quadrangulations of $\Sigma_g$. The quadrangulations of Corollaries 2 and 3 were first discovered (in terms of general octahedral graphs) by Hartsfield and Ringel \cite{hr}, who also showed their minimality in $\Sigma_g$ whenever $p \ge 4$ for Corollary 2, and $p \ge 8$ for Corollary 3. These corollaries are special cases of the main theorem of this note, stated shortly, and correspond to the particular cases $m=0$ and $m=1$, respectively. Using the method of current graphs for $m \ge 2$ would have been very complicated, so Hartsfield and Ringel had to stop at $m=1$. In contrast, the spinal method enables generalization of the results of \cite{hr} to an arbitrary $m$ not exceeding $\frac 14 p-1$. This demonstrates one way in which the spinal method is useful.

\begin{theorem}
Let $(K_p-me)$ be any graph formed by deleting $m$ edges from $K_p$ ($0 \le m \le \beta(K_p)$). If $(K_p-me)$ is connected, there exists a quadrangulation ${(K_p-me) [:]} \hookrightarrow \Sigma_g$ with $g = \beta(K_p)-m$. Moreover, any such quadrangulation is minimal in $\Sigma_g$ with $g \ge 1$ whenever $p \ge 4(m+1)$.
\end{theorem}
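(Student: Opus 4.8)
The plan is to prove the two assertions separately: the existence statement follows from Theorem~1 as soon as the first Betti number is computed, and minimality is a short Euler-characteristic count in which the hypothesis $p\ge 4(m+1)$ turns out to be exactly the break-even point.

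For existence I would first record that deleting $m$ edges from $K_p$ without disconnecting it lowers the first Betti number by precisely $m$: since $(K_p-me)$ is connected,
\[
\beta(K_p-me)=\bigl(|E(K_p)|-m\bigr)-|V(K_p)|+1=\beta(K_p)-m .
\]
As $(K_p-me)$ is non-trivial and connected, Theorem~1 then yields a quadrangulation $(K_p-me)[:]\hookrightarrow\Sigma_{\beta(K_p-me)}=\Sigma_{\beta(K_p)-m}$, and the hypotheses $0\le m\le\beta(K_p)$ and connectedness are precisely what make this admissible (they guarantee $g=\beta(K_p)-m\ge 0$ and that the spine is a legal input for Theorem~1).

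For minimality, let $Q$ be an arbitrary quadrangulation of $\Sigma_g$ with $g=\beta(K_p)-m$, and put $n=|V(Q)|$. Counting edge-sides around the quadrilateral faces gives $|E(Q)|=2|F(Q)|$, so Euler's formula $n-|E(Q)|+|F(Q)|=2-2g$ forces $|E(Q)|=2n-4+4g$; since $Q$ is simple, $|E(Q)|\le\binom n2$, and combining the two yields
\[
n^{2}-5n+8-8g\ge 0 .
\]
Substituting $8g=4(p-1)(p-2)-8m$ rewrites this as $n^{2}-5n-4p^{2}+12p+8m\ge 0$. This quadratic in $n$ is increasing for $n\ge 3$, and every quadrangulation has $n\ge 4$, so $n\le 2p-1$ would force $0\le(2p-1)^{2}-5(2p-1)-4p^{2}+12p+8m=-2p+6+8m$, i.e.\ $p\le 4m+3$, contradicting $p\ge 4(m+1)$. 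Hence $n\ge 2p$. Because the $2$-fold interlacement doubles the vertex set, $(K_p-me)[:]$ has exactly $2p$ vertices, so the quadrangulation constructed in the first part attains this lower bound and is therefore minimal in $\Sigma_g$. (The restriction $g\ge 1$ is automatic here: $p\ge 4(m+1)$ already forces $g=\beta(K_p)-m\ge 3$.)

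The minimality half has no real obstacle beyond bookkeeping --- it is the bound $|E|\le\binom n2$ for a simple quadrangulation, nothing more --- so the only point demanding care is the arithmetic: verifying that $4(m+1)$ is the \emph{sharp} cut-off at which the order $2p$ of $(K_p-me)[:]$ first meets the Euler lower bound, and checking that this recovers the thresholds $p\ge 4$ (the case $m=0$) and $p\ge 8$ (the case $m=1$) of Hartsfield and Ringel. The substantive input is external and already granted --- it is Theorem~1, which supplies the quadrilateral embeddings of the spine graphs that Hartsfield and Ringel could obtain only by hand via current graphs, and which is exactly what makes the passage to arbitrary $m$ routine.
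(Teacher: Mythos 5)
Your proof is correct and follows essentially the same route as the paper: existence via Theorem~1 after computing $\beta(K_p-me)=\beta(K_p)-m$, and minimality via Euler's formula combined with $4|F|=2|E|$ and $|E|\le\binom{n}{2}$, yielding the identical quadratic inequality $n^2-5n+8-8g\ge 0$. The only (cosmetic) difference is that you evaluate that quadratic at $n=2p-1$ and invoke monotonicity, whereas the paper solves it to get the bound $n\ge\lceil\frac12(5+\sqrt{32g-7})\rceil$ and then squares a double inequality; both reduce to the same condition $p>4m+3$, i.e.\ $p\ge 4(m+1)$.
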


\begin{proof}
Compute $g=\beta(K_p-me) = \frac12 (p-1)(p-2)-m$, and the existence follows from Theorem 1. It remains to prove minimality of the constructed quadrangulation. Let $\alpha_0$, $\alpha_1$, $\alpha_2$ denote the number of vertices, edges, and regions of an arbitrary quadrangulation of $\Sigma_g$. By Euler's equation, we have $\alpha_0 - \alpha_1 + \alpha_2 = 2-2g$. Furthermore, since any pair of vertices are joined by at most one edge, we have $\alpha_1 \le {\alpha_0\choose 2}$, and since each edge meets exactly two regions, we have $4\alpha_2 = 2\alpha_1$. From these it can be derived that $\alpha_0^2 –-5\alpha_0 +(8-8g) \ge 0$. This quadratic inequality has the solution:
\begin{equation}
\alpha_0 \ge \Big\lceil \frac 12 \Big{(}5+\sqrt{32g-7} \Big{)} \Big\rceil.
\end{equation}
With $g$ computed in the beginning of the proof, we find
\begin{equation}
32g-7 = 16p^2 -48p+25-32m. 
\end{equation}
Now, since the constructed quadrangulation has $2p$ vertices, it follows from Eqs. (1) and (2) that it is minimal in $\Sigma_g$ whenever the following double inequality holds:   
$$
2p-1 < \frac12 \Big{(} 5+\sqrt{16p^2-48p+25-32m} \Big{)} \le 2p.
$$
This can be rewritten as $16p^2-56p+49<16p^2-48p+25-32m\le16p^2-40p+25$, or as
$$
\left\{     \begin{array}{lr}       8p>24+32m,\\      8p \ge -32m.    \end{array}   \right.
$$
Since the second inequality is a tautology, we conclude that $p>3+4m$, hence $p\ge4+4m$.         
The proof is complete. 
\end{proof}

Theorem 2 provides infinitely many new minimal quadrangulations (for infinitely many genera $g$) not covered by Hartsfield and Ringel \cite{hr}. The new minimal quadrangulations correspond to the values of $m$ satisfying the double inequality: $2 \le m \le {\frac 14 p -1}$. For example, the quadrangulations ${K_{12}[:]} \hookrightarrow \Sigma_{55}$, ${(K_{12}-e)[:]} \hookrightarrow \Sigma_{54}$, and ${(K_{12}-2e)[:]} \hookrightarrow \Sigma_{53}$ are minimal for the corresponding surfaces (and have 132, 130, and 128 regions, respectively). The first two are covered by \cite{hr} (or Corollaries 2 and 3), but the minimal quadrangulation on $\Sigma_{53}$ is a new one. Note that a quadrangulation is minimal in the sense of \cite{hr} if it has the minimum number of regions, but since the definition assumes the surface is fixed, their definition agrees with the one given in the beginning of this section.

The construction using spines creates quadrangulations of an easily controlled order. The following is an analogue of Corollary 1.

\begin{corollary}
For any integers $g \ge 0$ and $p \ge 2$ such that $g \le \beta(K_p)$, there exists a spinal quadrangulation of $\Sigma_g$ with order $2p$.
\end{corollary}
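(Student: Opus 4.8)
The plan is to reduce everything to Theorem 1 by observing that a spinal quadrangulation $G[:] \hookrightarrow \Sigma_{\beta(G)}$ has order $|V(G[:])| = |V(G')| + |V(G'')| = 2|V(G)|$. Thus it suffices to exhibit, for the given $p$ and $g$, a non-trivial connected graph $G$ with $|V(G)| = p$ and $\beta(G) = g$; then Theorem 1 yields a quadrangulation $G[:] \hookrightarrow \Sigma_g$ which is spinal by definition and has order $2p$.

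To construct such a $G$, I would start from a spanning tree on $p$ vertices, for concreteness the path $P_p$. It is connected, and non-trivial since $p \ge 2$ forces at least one edge, and $\beta(P_p) = (p-1) - p + 1 = 0$. I would then reinsert non-edges one at a time. Adding a single edge to a connected graph keeps it connected and raises $\beta$ by exactly $1$, because $|E|$ grows by $1$ while $|V|$ is unchanged. Continuing until the graph becomes $K_p$, the total number of edges available to insert is $\binom{p}{2} - (p-1) = \frac12 (p-1)(p-2) = \beta(K_p)$. Hence, performing exactly $g$ insertions — which is possible precisely because $0 \le g \le \beta(K_p)$ — produces a connected graph $G$ on $p$ vertices with $\beta(G) = g$. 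Applying Theorem 1 to this $G$ completes the argument, the boundary cases $g = 0$ (a tree) and $g = \beta(K_p)$ ($G = K_p$, recovering Corollary 2) being included automatically.

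I do not expect a genuine obstacle here; the construction parallels that of Corollary 1, with the ladder-graph device replaced by the even simpler operation of adding edges to a spanning path. The only point requiring care is the elementary count showing that the set of realizable values of $\beta(G)$ over connected graphs $G$ on $p$ vertices is exactly $\{0, 1, \dots, \beta(K_p)\}$, and the one-edge-at-a-time argument establishes this directly.
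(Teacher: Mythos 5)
Your proposal is correct and is essentially the paper's argument: the paper likewise notes that the order of a spinal quadrangulation is twice that of its spine and takes as spine a connected graph on $p$ vertices with first Betti number $g$, obtained as $K_p$ minus $\beta(K_p)-g$ edges (via Theorem 2), which is the same graph you build bottom-up from a spanning path by inserting $g$ edges. The only difference is presentational (deleting edges from $K_p$ versus adding edges to a tree, and citing Theorem 1 directly rather than Theorem 2), so there is nothing substantive to distinguish the two proofs.
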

\begin{proof}
The order of a spinal quadrangulation is twice the order of its spine. By Theorem 2, we can take $(K_p-me)$ as a spine, letting $m$ be $(\beta(K_p)-g)$. Clearly, it is possible to remove this number of edges from $K_p$ so that the remaining graph is still connected.
\end{proof}

The spectrum $\{2p\}$ described in Corollary 4 for the possible orders is full for fixed genus; there are infinitely many spinal quadrangulations of arbitrarily large even order in each surface $\Sigma_g$. Solving the quadratic equation $\beta(K_p)=g$ for $p$, we come to the following formula for the order of a minimal spinal quadrangulation with genus $g\ge0$: 
\begin{equation}
{|V|_{\min}^{{\rm{sp}}}(g)} = 2 \Big{\lceil} \frac12 (3+\sqrt{8g+1}) \Big{\rceil}
\end{equation}
and therefore
\begin{equation}
{|V|_{\min}(g)} \le 2 \Big{\lceil} \frac12 (3+\sqrt{8g+1}) \Big{\rceil},
\end{equation}
where ${|V|_{\min}(g)}$ denotes the order of a general minimal quadrangulation with genus $g$. It is surprising that no comprehensive formula is available for ${|V|_{\min}(g)}$.
We'll shortly provide a partial formula for ${|V|_{\min}(g)}$ and show that ${|V|_{\min}(g)}$ reaches the upper bound (4) infinitely often.

For example, ${|V|_{\min}^{{\rm{sp}}}(0)} = {|V|_{\min}(0)} =4$ by Eq. (3). It should be noted that Hartsfield and Ringel \cite{hr} assert that ${|V|_{\min}(0)} =8$ because their definition of a quadrangulation, in comparison to the definition given in the beginning of Section 2, has an additional requirement as follows: the intersection of any two distinct regions is either empty or at most one edge and at most three vertices. However, any minimal quadrangulation in the sense of our definition has no vertices of degree 2 whenever $g\ge1$, and therefore satisfies this requirement. By Eq. (3), ${|V|_{\min}^{{\rm{sp}}}(1)} =6$ and ${|V|_{\min}^{{\rm{sp}}}(2)} =8$, whereas it is shown in \cite{hr} that ${|V|_{\min}(1)} =5$ and ${|V|_{\min}(2)} =7$.

\begin{corollary}
For $g \ge 3$, ${|V|_{\min}(g)} = 2 \lceil {a(g)} \rceil$ whenever $\lceil a(g) \rceil = \lfloor b(g) \rfloor$,
\noindent where $a(g)=\frac12 (3+\sqrt{8g+1})$ and $b(g)=\frac14 (7+\sqrt{32g-15})$.  
\end{corollary}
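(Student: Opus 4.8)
The plan is to sandwich $|V|_{\min}(g)$ between a lower bound and an upper bound that coincide precisely when $\lceil a(g)\rceil=\lfloor b(g)\rfloor$. The upper bound is already in hand: inequality (4), coming from the spinal construction of Corollary~4, gives $|V|_{\min}(g)\le 2\lceil a(g)\rceil$. So the entire content of the corollary is the matching lower bound $|V|_{\min}(g)\ge 2\lceil a(g)\rceil$, which under the hypothesis is the same as $|V|_{\min}(g)\ge 2\lfloor b(g)\rfloor$.

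For the lower bound I would run the same Euler-equation count as in the proof of Theorem~2: every quadrangulation of $\Sigma_g$ satisfies inequality (1), so in particular its number of vertices obeys $\alpha_0\ge\tfrac12\big(5+\sqrt{32g-7}\,\big)$. Writing $k=\lfloor b(g)\rfloor$, observe that $2b(g)-1=\tfrac12\big(5+\sqrt{32g-15}\,\big)$ is \emph{strictly} less than $\tfrac12\big(5+\sqrt{32g-7}\,\big)$, since $32g-15<32g-7$ and the square root is increasing. Hence if some quadrangulation of $\Sigma_g$ had at most $2k-1$ vertices, then $\tfrac12\big(5+\sqrt{32g-7}\,\big)\le\alpha_0\le 2k-1\le 2b(g)-1=\tfrac12\big(5+\sqrt{32g-15}\,\big)$, a contradiction. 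Therefore $\alpha_0\ge 2k=2\lfloor b(g)\rfloor$ for every quadrangulation of $\Sigma_g$, i.e.\ $|V|_{\min}(g)\ge 2\lfloor b(g)\rfloor$.

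Combining the two bounds with the hypothesis $\lceil a(g)\rceil=\lfloor b(g)\rfloor$ gives $2\lceil a(g)\rceil\ge|V|_{\min}(g)\ge 2\lfloor b(g)\rfloor=2\lceil a(g)\rceil$, which forces $|V|_{\min}(g)=2\lceil a(g)\rceil$. There is no real obstacle here; the one point needing care is that the inequality $\sqrt{32g-15}<\sqrt{32g-7}$ is strict, as this strictness is exactly the slack that lets the (generally not tight) bound (1) be rounded up past $2\lfloor b(g)\rfloor-1$. It is also worth noting that the hypothesis $g\ge 3$ makes $32g-15>0$ so that $b(g)$ is real, and that for $g=1,2$ the condition $\lceil a(g)\rceil=\lfloor b(g)\rfloor$ fails anyway, consistently with $|V|_{\min}(1)=5$ and $|V|_{\min}(2)=7$ being odd and lying strictly below $2\lceil a(g)\rceil$.
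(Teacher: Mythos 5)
Your proof is correct, and it reaches the conclusion by a different decomposition than the paper's. The paper argues through the minimality clause of Theorem~2: it notes that $|V|_{\min}(g)=|V|_{\min}^{\mathrm{sp}}(g)=2p$ whenever some integer $p$ satisfies $\beta(K_p)+1-\tfrac{p}{4}\le g\le\beta(K_p)$, rewrites this as $a(g)\le p\le b(g)$, observes that for $g\ge3$ this interval has length less than $1$ and so contains at most one integer, and concludes that the required $p$ exists precisely when $\lceil a(g)\rceil=\lfloor b(g)\rfloor$. You instead sandwich $|V|_{\min}(g)$ between the spinal upper bound $2\lceil a(g)\rceil$ of Eq.~(4) and a lower bound $2\lfloor b(g)\rfloor$ extracted directly from the Euler-count inequality (1), using the strict inequality $\sqrt{32g-15}<\sqrt{32g-7}$ to push the real-valued bound past the integer $2\lfloor b(g)\rfloor-1$. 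Both arguments ultimately rest on the same two pillars --- inequality (1) and the existence of spinal quadrangulations of order $2p$ --- but your packaging makes explicit the unconditional lower bound $|V|_{\min}(g)\ge 2\lfloor b(g)\rfloor$ (valid whenever $32g-15>0$, not only under the hypothesis) and dispenses with the at-most-one-integer observation. The paper's version, in exchange, identifies the extremal object concretely as the spinal quadrangulation with spine $K_p-me$, $p=\ell(g)$, $m=\beta(K_p)-g$; since your upper bound is likewise attained by a spinal quadrangulation, the equality $|V|_{\min}(g)=|V|_{\min}^{\mathrm{sp}}(g)$ still follows from your argument, so nothing of substance is lost.
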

\begin{proof}
It follows from Theorem 2 that ${|V|_{\min}(g)} = {|V|_{\min}^{{\rm{sp}}}(g)}=2p$ whenever this double inequality holds: $\beta(K_p)+1- \frac{p}4 \le g \le \beta(K_p)$ or, equivalently, $a(g) \le p \le b(g)$.
Note that whenever $g\ge3$ (and so $p \ge 4$) we have $a(g) \le b(g)$, and also observe that the closed interval $[a(g), b(g)]$ has length $<1$ and can contain at most one integer, $\ell(g)$. Furthermore, such an $\ell(g)$ exists if $\lceil a(g) \rceil = \lfloor b(g)\rfloor$ ($=\ell(g)$), in which case we have ${|V|_{\min}(g)} = {|V|_{\min}^{{\rm{sp}}}(g)} =2\ell(g)$. 
\end{proof}

The formula of Corollary 5 applies to infinitely many values of $g$. For example, for each $p\ge4$ by letting $g = \beta(K_p)$ we find $a(g)=p$ ($=\ell(g)$), and so ${|V|_{\min}(g)}=2p$.
\medskip

\bibliographystyle{model1-num-names}
\bibliography{<your-bib-database>}

\end{document}